\newcommand{\FF}{{\cal F}}
\newcommand{\LL}{{\cal L}}
\newcommand{\BL}{{\mathbb L}}
\newcommand{\BN}{{\mathbb N}}
\newcommand{\BR}{{\mathbb R}}
\newcommand{\fch}{\mbox{\rm\bf 1}}
\newtheorem{theorem}{Theorem}[section]
\newtheorem{proposition}[theorem]{Proposition}%[subsection]
\newtheorem{corollary}[theorem]{Corollary}%[subsection]
\newtheorem{remark}[theorem]{Remark}
\numberwithin{equation}{section}
\begin{document}

\title {On backward stochastic differential equations approach to
valuation of American options}
\author {Tomasz Klimsiak and Andrzej Rozkosz}
\date{}
\maketitle

\renewcommand{\thefootnote}{}

\footnote{2010 \emph{Mathematics Subject Classification}: Primary
91G20; Secondary 60H30, 60H99}

\footnote{\emph{Key words and phrases}: backward stochastic
differential equation, obstacle problem, American option.}

\renewcommand{\thefootnote}{\arabic{footnote}}
\setcounter{footnote}{0}

\begin{abstract}
We consider the problem of valuation of American (call and put)
options written on a dividend paying stock governed by the
geometric Brownian motion. We show that the value function has two
different but related representations: by means of a solution of
some nonlinear backward stochastic differential equation and weak
solution to some semilinear partial differential equation.
\end{abstract}

\section{Introduction}

We consider a financial market model in which the price dynamics
of a dividend paying stock $X^{s,x}$ evolves (under the equivalent
martingale measure $P$) according to the stochastic differential
equation (SDE) of the form
\begin{equation}
\label{eq1.1} X^{s,x}_t=x+\int^t_s(r-d)X^{s,x}_{\theta}\,d\theta
+\int_s^t\sigma X^{s,x}_{\theta}\,dW_{\theta},\quad t\in[s,T].
\end{equation}
Here $x>0$, $W$ is a standard Wiener process, $d\ge0$ is the
dividend rate on the stock, $r\ge0$ is the risk-free interest rate
and $\sigma>0$ is the volatility.

It is well known (see, e.g., \cite[Section 2.5]{KS}) that the
arbitrage-free value of an American option with payoff function
$g:\BR\rightarrow[0,\infty)$ and expiration time $T$ is given by
\begin{equation}
\label{eq1.2} V(s,x)=\sup_{s\le\tau\le T}
Ee^{-r(\tau-s)}g(X^{s,x}_{\tau}),
\end{equation}
where $E$ denotes the expectation with respect to $P$ and the
supremum is taken over all stopping times with respect to the
standard augmentation $\{\FF_t\}$ of the filtration generated by
$W$. From \cite{EQ} we know also that the optimal stopping problem
and, a fortiori, the value function $V$, are related to the
solution $(Y^{s,x},Z^{s,x},K^{s,x})$ of the reflected backward
stochastic differential equation (RBSDE)
\begin{equation}
\label{eq1.3} \left\{
\begin{array}{l}
Y^{s,x}_t=g(X^{s,x}_T)-\int^T_trY^{s,x}_{\theta}\,d\theta
+K^{s,x}_T-K^{s,x}_t
-\int^T_tZ^{s,x}_{\theta}\,dW_{\theta},\quad
t\in[s,T],\medskip\\
Y^{s,x}_t\ge g(X^{s,x}_t),\quad t\in[s,T],\medskip \\
K^{s,x}\mbox{ is increasing, continuous, }K^{s,x}_s=0,\,\,
\int^T_s(Y^{s,x}_t-g(X^{s,x}_t))\,dK^{s,x}_t=0
\end{array}
\right.
\end{equation}
via the equality
\begin{equation}
\label{eq1.4} V(s,x)=Y^{s,x}_s,\quad (s,x)\in Q_T\equiv
[0,T]\times\BR.
\end{equation}
Formula (\ref{eq1.4}) when combined with general results on
connections between RBSDEs and parabolic PDEs proved in
\cite{EKPPQ} provides  a probabilistic proof of the fact that
$V=\{V(s,x); (s,x)\in Q_T\}$ with $V(s,x)$ given by (\ref{eq1.2})
is a viscosity solution of the obstacle problem (or, in another
terminology, the variational inequality)
\begin{equation}
\label{eq1.5}
\left\{
\begin{array}{ll}
\min(u(s,x)-g(x),-\LL_{BS}u(s,x)+ru(s,x))=0, &
(s,x)\in Q_T,\medskip\\
u(T,x)=g(x), & x\in\mathbb{R},
\end{array}
\right.
\end{equation}
where $\LL_{BS}$ is the Black and Scholes differential operator
defined by
\[
\LL_{BS}u=\partial_su+(r-d)x\partial_xu+\frac12\sigma^2x^2\partial^2_{xx}u.
\]

In the present paper we concentrate on the American call and put
options with exercise price $K>0$ for which the payoff function is
given by
\[
g(x)=\left\{
\begin{array}{ll} (x-K)^+, & \mbox{call option,} \\
(K-x)^+, & \mbox{put option. }
\end{array}
\right.
\]
We prove that in that case the process $K^{s,x}$ has the form
\begin{equation}
\label{eq1.6} K^{s,x}_t=\left\{
\begin{array}{ll} \int^t_s(dX^{s,x}_{\theta}-rK)^+
{\mathbf{1}}_{\{Y^{s,x}_{\theta}=g(X^{s,x}_{\theta})\}}\,d\theta,
 & \mbox{call option,} \medskip\\
\int^t_s(rK-dX^{s,x}_{\theta})^+
{\mathbf{1}}_{\{Y^{s,x}_{\theta}=g(X^{s,x}_{\theta})\}}\,d\theta,&
\mbox{put option}
\end{array}
\right.
\end{equation}
for $t\in[s,T]$, i.e. the first two components $(Y^{s,x},Z^{s,x})$
of the solution of (\ref{eq1.3}) solve the usual (non-reflected)
BSDE
\begin{align}
\label{eq1.7} Y^{s,x}_t&=g(X^{s,x}_T)+\int^T_t
(-rY^{s,x}_{\theta}+q(X^{s,x}_{\theta},Y^{s,x}_{\theta}))\,d\theta\\
&\quad-\int^T_tZ^{s,x}_{\theta}\,dW_{\theta},\quad
t\in[s,T],\nonumber
\end{align}
where
\[
q(x,y)=\left\{
\begin{array}{ll}
(dx-rK)^+{\mathbf{1}}_{(-\infty,g(x)]}(y), & \mbox{call option,}
\medskip\\
(rK-dx)^+{\mathbf{1}}_{(-\infty,g(x)]}(y),& \mbox{put option}
\end{array}
\right.
\]
for $x,y\in\mathbb{R}$. The above result is in fact a
reformulation of the representation for  Snell envelope of the
discounted payoff process $\xi_t=e^{-r(t-s)}g(X^{s,x}_t)$,
$t\in[s,T]$ (see Section \ref{sec3}). Therefore our contribution
here consists in providing new proof of the last statement and
clarifying relations between (\ref{eq1.3}) and (\ref{eq1.7}). We
also hope that our proof of the representation for Snell envelope
for $\xi$ will be of interest, because contrary to known to us
proofs it avoids considering the parabolic free-boundary value
problem associated with the optimal stopping problem
(\ref{eq1.2}).

Formula (\ref{eq1.6}) has an analytical counterpart. Let
$\varrho(x)=(1+|x|^2)^{-\alpha}$, $x\in\BR$, where $\alpha$ is
chosen so that $\int_{\BR}\varrho^2(x)x^2\,dx<\infty$. By a
solution of (\ref{eq1.5}) we understand a pair $(u,\mu)$
consisting of a measurable function $u:Q_T\rightarrow\BR$
possessing some regularity properties and a Radon measure $\mu$ on
$Q_{T}$ such that
\begin{equation}
\label{eq1.8} \left\{
\begin{array}{l}
\LL_{BS}u=ru-\mu,\medskip\\
u(T)=g,\quad u\ge g,\quad \int_{Q_T}(u-g)\varrho^2\,d\mu=0
\end{array}
\right.
\end{equation}
(see Section \ref{sec2} for details). We prove that (\ref{eq1.8})
has a unique solution $(u,\mu)$ such that $\mu$ is absolutely
continuous with respect to the Lebesgue measure and
\begin{equation}
\label{eq1.9} d\mu(t,x)=q(x,u(t,x))\,dt\,dx.
\end{equation}
Moreover, for each $(s,x)\in Q_{T}$ such that $x\neq0$,
\begin{equation}
\label{eq1.10} (Y^{s,x}_t,Z^{s,x}_t)=(u(t,X^{s,x}_t),\sigma x
\partial_xu(t,X^{s,x}_t)), \quad t\in[s,T],\quad P\mbox{-}a.s.,
\end{equation}
i.e. (\ref{eq1.3}) provides probabilistic representation for the
first component $u$ of a solution of (\ref{eq1.8}). In particular,
$V=u$. Formula (\ref{eq1.9}) is an analytical analogue of
(\ref{eq1.6}).

From (\ref{eq1.8}), (\ref{eq1.9}) it follows that $V$ is a
solution of the semilinear Cauchy problem
\begin{equation}
\label{eq1.11} \LL_{BS}u=ru-q(\cdot,u),\quad u(T,\cdot)=g.
\end{equation}
The above problem was considered in \cite{BKR1,BKR2} as an
alternative to the obstacle problem formulation (\ref{eq1.5}) and
the free boundary problem formulation (see, e.g., \cite[Section
2.7]{KS}). In \cite{BKR1} it is shown that (\ref{eq1.11}) has a
unique viscosity solution (since $q$ is discontinuous, the
standard definition of a viscosity solution is modified
appropriately) and $V=u$. Our approach to (\ref{eq1.5}) via
(\ref{eq1.8}) shows that in fact (\ref{eq1.11}) results from a
better understanding of the nature of solutions of (\ref{eq1.5}).

\section{Obstacle problem for the Black and Scholes equation}
\label{sec2}

In this section we prove existence, uniqueness and stochastic
representation of solutions of the obstacle problem (\ref{eq1.8}).
We begin with the precise definition of solutions of
(\ref{eq1.8}).

Let $Q_{st}=[s,t]\times\BR$, $Q_t=Q_{0t}$, and let $\mathcal{R}$
denote the space of all functions
$\varrho:\mathbb{R}\rightarrow\mathbb{R}$ of the form
$\varrho(x)=(1+|x|^{2})^{-\alpha}$, $x\in\mathbb{R}$, for some
$\alpha\ge0$. In the whole paper we assume that
$\int_{\mathbb{R}}\varrho^{2}(x)x^{2}\,dx<\infty$.

Given $\varrho\in\mathcal{R}$ we denote by
${\mathbb{L}}_{2,\varrho}({\mathbb{R}})$ the Hilbert  space of
functions $u$ on ${\mathbb{R}}$ such that $u\varrho\in
{\mathbb{L}}_2({\mathbb{R}})$ equipped with the inner product
$\langle u,v\rangle_{2,\varrho}
=\int_{\mathbb{R}}uv\varrho^{2}\,dx$. Similarly, by
${\mathbb{L}}_{2,\varrho}(Q_{st})$ we denote the Hilbert space of
functions $u$ on $Q_{st}$ such that
$u\varrho\in{\mathbb{L}}_{2}(Q_{st})$ with the inner product
$\langle u,v\rangle_{2,\varrho,s,t}
=\int_{Q_{st}}uv\varrho^{2}\,dx\,dt$. If $s=0$ we drop the
subscript $s$ in the notation. $H_{\varrho}=\{\eta\in
{\mathbb{L}}_{2,\varrho}({\mathbb{R}}): x\partial_x\eta(x)\in{
\mathbb{L}}_{2,\varrho}({\mathbb{R}})\}$, $W_{\varrho}=\{\eta\in
{\mathbb{L}}_{2}(0,T;H_{\varrho}):
\partial_t\eta\in{\mathbb{L}}_{2}(0,T;H_{\varrho}^{-1})\}$, where
$H^{-1}_{\varrho}$ is the space dual to $H_{\varrho}$. By $\langle
\cdot,\cdot\rangle_{\varrho,T}$ we denote the duality pairing
between ${\mathbb{L}}_{2}(0,T;H_{\varrho})$ and
${\mathbb{L}}_{2}(0,T;H_{\varrho}^{-1})$. Finally,
$V=W_{\varrho}\cap C(Q_{T})$.

We say that a pair $(u,\mu)$, where $u\in V$ and $\mu$ is a Radon
measure on $Q_{T}$, is a solution of the obstacle problem
(\ref{eq1.8}) if
\begin{equation}
\label{eq2.01} u(T)=g,\quad u\ge g,\quad
\int_{Q_T}(u-g)\varrho^2\,d\mu=0
\end{equation}
and the equation
\begin{equation}
\label{eq2.02} \LL_{BS}u=ru-\mu
\end{equation}
is satisfied in the strong sense, i.e. for every $\eta\in
C_{0}^{\infty}(Q_{T})$,
\[
\langle\partial_tu,\eta\rangle_{\varrho,T}+ \langle
\LL_{BS}u,\eta\rangle_{\varrho,T} =r\langle
u,\eta\rangle_{2,\varrho,T}-\int_{Q_{T}}\eta\varrho^{2}\,d\mu,
\]
where
\[
\langle\LL_{BS}u,\eta\rangle_{\varrho,T}
=\langle(r-d)x\partial_xu,\eta\rangle_{2,\varrho,T}
-\frac12\sigma^{2}\langle\partial_xu,
\partial_x(x^{2}\eta\varrho^{2})\rangle_{2,T}.
\]

We  say that a pair $(u,\mu)$ satisfies (\ref{eq2.02}) in the weak
sense if $\mu$ is a Radon measure on $Q_T$, $u\in\BL_{2}(0,T;
H_{\varrho})\cap C([0,T],{\mathbb{L}}_{2,\varrho}(\mathbb{R}))$
and for every $\eta\in C_{0}^{\infty}(Q_{T})$,
\begin{align*}
\langle u,\partial_t\eta\rangle_{\varrho,T} -\langle
\LL_{BS}u,\eta\rangle_{\varrho,T}&=\langle h(T),
\eta(T)\rangle_{2,\varrho}-\langle u(0),
\eta(0)\rangle_{2,\varrho}\\
&\quad-r\langle u,\eta\rangle_{2,\varrho,T}
+\int_{Q_{T}}\eta\varrho^{2}\,d\mu.
\end{align*}

Let $\{\FF_t\}$ denote the standard augmentation of the natural
filtration generated by $W$. By a solution of RBSDE (\ref{eq1.3})
we understand a triple $(Y^{s,x},Z^{s,x},K^{s,x})$ of
$\{\FF_t\}$-progressively measurable processes on $[s,T]$ such
that
\begin{equation}
\label{eq2.1} E\sup_{t\in[s,T]}|Y^{s,x}_t|^2<\infty,\quad
E\int^T_s|Z^{s,x}_t|^2\,dt<\infty,\quad E|K^{s,x}_T|^2<\infty
\end{equation}
and (\ref{eq1.3}) is satisfied $P$-a.s.. A pair
$(Y^{s,x},Z^{s,x})$ of $\{\FF_t\}$-progressively measurable
process is a solution of BSDE (\ref{eq1.7}) if (\ref{eq1.7}) holds
$P$-a.s. and $Y^{s,x},Z^{s,x}$ satisfy the integrability
conditions (\ref{eq2.1}).

From general results proved in \cite{EKPPQ} it follows that
(\ref{eq1.3}) has a unique solution. We shall prove that the third
component $K^{s,x}$ of the solution is absolutely continuous.

\begin{proposition}
\label{prop2.1}  If $(Y^{s,x},Z^{s,x},K^{s,x})$ is a solution of
RBSDE (\ref{eq1.3}) then
\begin{equation}
\label{eq3.3} K^{s,x}_{t}-K^{s,x}_{\tau}\le
\int_{\tau}^{t}{\mathbf{1}}_{\{Y^{s,x}_\theta
=S_\theta\}}(dX^{s,x}_\theta-rK)^+\,d\theta,\quad s\le\tau\le t\le
T.
\end{equation}
\end{proposition}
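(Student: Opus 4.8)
The plan is to reduce the whole statement to an analysis of the nonnegative semimartingale
\[
A_t:=Y^{s,x}_t-g(X^{s,x}_t)\ge0,
\]
which by the Skorokhod condition in (\ref{eq1.3}) vanishes exactly on the support of $dK^{s,x}$. To lighten notation I suppress the superscripts on $X,Y,Z$ but keep $K^{s,x}$ for the reflecting process (since $K$ denotes the strike), and I identify $S_\theta=g(X_\theta)$. First I would record the dynamics. Differentiating the first line of (\ref{eq1.3}) gives $dY_t=rY_t\,dt-dK^{s,x}_t+Z_t\,dW_t$, while, because $g(x)=(x-K)^+$ is not $C^2$, the It\^o--Tanaka formula applied to the call payoff yields
\[
g(X_t)=g(X_0)+\int_0^t \mathbf{1}_{\{X_\theta>K\}}\,dX_\theta+\tfrac12 L_t,
\]
where $L$ is the local time of $X$ at the strike $K$ and $dX_\theta=(r-d)X_\theta\,d\theta+\sigma X_\theta\,dW_\theta$. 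Subtracting, $A$ has the decomposition $dA_t=\beta_t\,dt-dK^{s,x}_t-\tfrac12\,dL_t+\gamma_t\,dW_t$, with drift coefficient $\beta_t=rY_t-\mathbf{1}_{\{X_t>K\}}(r-d)X_t$ and diffusion coefficient $\gamma_t=Z_t-\mathbf{1}_{\{X_t>K\}}\sigma X_t$.

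The heart of the argument is to evaluate $\int_\tau^t\mathbf{1}_{\{A_\theta=0\}}\,dA_\theta$ in two ways. On one hand, since $A\ge0$ we have $A=A^+$, so Tanaka's formula for $x\mapsto x^+$ gives $\int_\tau^t\mathbf{1}_{\{A_\theta=0\}}\,dA_\theta=\tfrac12\bigl(L^0_t(A)-L^0_\tau(A)\bigr)\ge0$, where $L^0(A)$ is the local time of $A$ at $0$. On the other hand, substituting the decomposition of $A$: the Skorokhod condition forces $dK^{s,x}$ to be carried by $\{A=0\}$, so $\int_\tau^t\mathbf{1}_{\{A_\theta=0\}}\,dK^{s,x}_\theta=K^{s,x}_t-K^{s,x}_\tau$; and the stochastic integral $\int_\tau^t\mathbf{1}_{\{A_\theta=0\}}\gamma_\theta\,dW_\theta$ vanishes, because by the occupation-times formula $\int_\tau^t\mathbf{1}_{\{A_\theta=0\}}\,d\langle A\rangle_\theta=\int_{\mathbb R}\mathbf{1}_{\{a=0\}}\bigl(L^a_t(A)-L^a_\tau(A)\bigr)\,da=0$, so this integral has zero quadratic variation. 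Equating the two expressions and rearranging gives
\[
K^{s,x}_t-K^{s,x}_\tau=\int_\tau^t\mathbf{1}_{\{A=0\}}\beta_\theta\,d\theta-\tfrac12\int_\tau^t\mathbf{1}_{\{A=0\}}\,dL_\theta-\tfrac12\bigl(L^0_t(A)-L^0_\tau(A)\bigr),
\]
and since $L$ and $L^0(A)$ are nondecreasing the last two terms are nonpositive, whence $K^{s,x}_t-K^{s,x}_\tau\le\int_\tau^t\mathbf{1}_{\{A=0\}}\beta_\theta\,d\theta$.

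Finally I would bound $\beta$ pointwise on $\{A=0\}=\{Y=g(X)\}$, where $Y_\theta=g(X_\theta)=(X_\theta-K)^+$. On $\{X_\theta>K\}$ one computes $\beta_\theta=r(X_\theta-K)-(r-d)X_\theta=dX_\theta-rK$, while on $\{X_\theta\le K\}$ one has $Y_\theta=0$ and $\mathbf{1}_{\{X_\theta>K\}}=0$, so $\beta_\theta=0$. In both cases $\mathbf{1}_{\{Y_\theta=S_\theta\}}\beta_\theta\le\mathbf{1}_{\{Y_\theta=S_\theta\}}(dX_\theta-rK)^+$, which combined with the previous bound yields
\[
K^{s,x}_t-K^{s,x}_\tau\le\int_\tau^t\mathbf{1}_{\{Y_\theta=S_\theta\}}(dX_\theta-rK)^+\,d\theta,
\]
i.e. (\ref{eq3.3}).

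I expect the two genuinely delicate points to be (i) the non-smoothness of the payoff, which forces the It\^o--Tanaka formula and introduces the local time $L$ of $X$ at the strike; it enters with the favourable sign, but its presence has to be accounted for carefully; and (ii) the vanishing of the martingale part restricted to $\{A=0\}$, which rests on the occupation-times identity $\int\mathbf{1}_{\{A=0\}}\,d\langle A\rangle=0$. The put case is entirely analogous, using $g(x)=(K-x)^+$ and $(rK-dX_\theta)^+$ in place of $(dX_\theta-rK)^+$.
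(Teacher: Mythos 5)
Your proof is correct. It shares the paper's skeleton---Tanaka's formula applied to the nonnegative process $A=Y^{s,x}-g(X^{s,x})$, the Tanaka--Meyer formula applied to the payoff $(X^{s,x}-K)^+$ (which introduces the local time of $X^{s,x}$ at the strike), the Skorokhod condition to identify $\int\mathbf{1}_{\{A=0\}}\,dK^{s,x}$ with the full increment of $K^{s,x}$, and the same pointwise computation of the drift on the contact set ($\beta_\theta=dX^{s,x}_\theta-rK$ when $X^{s,x}_\theta>K$, and $\beta_\theta=0$ when $X^{s,x}_\theta\le K$, matching the paper's $f(t,x)=(-dx+rK)\mathbf{1}_{[K,\infty)}(x)$)---but it resolves the two delicate points differently, and more self-containedly. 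For the martingale part, the paper first invokes $Y^{s,x}_t=u(t,X^{s,x}_t)$ with $u$ the viscosity solution of (\ref{eq1.5}) (via \cite[Theorem 8.5]{EKPPQ}) and then cites Proposition 4.2 and Remark 4.3 of \cite{EKPPQ} to conclude $Z^{s,x}_t=\sigma X^{s,x}_t\mathbf{1}_{(K,\infty)}(X^{s,x}_t)$ a.s.\ on the contact set; your occupation-times identity $\int_\tau^t\mathbf{1}_{\{A_\theta=0\}}\,d\langle A\rangle_\theta=0$ yields the same cancellation (indeed it shows $\gamma=0$, $d\theta\otimes dP$-a.e.\ on $\{A=0\}$) using nothing beyond the equations themselves. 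For the strike local time, the paper shows $\int\mathbf{1}_{\{Y^{s,x}_\theta=S_\theta\}}\,dL^0_\theta(X^{s,x}-K)=0$ exactly, using strict positivity of $u=V$ so that the contact set avoids $\{x=K\}$; you instead discard this term (together with $L^0(A)$) by its favourable sign, which suffices since only an upper bound is claimed---and in fact the same sign observation would have spared the paper its positivity argument as well, since that local time enters the rearranged identity on the same side as the increments of $K^{s,x}$. The trade-off: the paper's route leans on the PDE/viscosity representation and known results from \cite{EKPPQ} but records the exact structure of $Z^{s,x}$ on the contact set via those results, whereas yours works directly from (\ref{eq1.1}) and (\ref{eq1.3}) with only Tanaka's formula and the occupation-times formula, and recovers the $Z$-identification on $\{Y^{s,x}=S\}$ as a by-product.
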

\begin{proof}
We prove the theorem in the case of call option. The proof for put
option is similar and therefore left to the reader.

Suppose that $(Y^{s,x},Z^{s,x},K^{s,x})$ is a solution of
(\ref{eq1.3}) and $u$ is a viscosity solution of (\ref{eq1.5}). By
\cite[Theorem 8.5]{EKPPQ},
\begin{equation}
\label{eq3.01} Y^{s,x}_t=u(t,X^{s,x}_t),\quad t\in[s,T].
\end{equation}
Set $S_t=g(X^{s,x}_t)$, $t\in[s,T]$, and denote by
$\{L^0_t(\xi);t\ge0\}$ the local time at 0 of a continuous
semimartingale $\xi$.  By the Tanaka-Meyer formula, for
$t\in[s,T]$ we have
\begin{align}
\label{eq3.02} (X^{s,x}_t-K)^+
&=\int_s^t{\mathbf{1}}_{(K,\infty)}(X^{s,x}_\theta)(r-d)X^{s,x}_\theta\,
d\theta \\
&\quad+\int_s^t{\mathbf{1}}_{(K,\infty)}(X^{s,x}_\theta)\sigma
X^{s,x}_\theta \,dW_{\theta}+\frac12L^{0}_t(X^{s,x}-K) \nonumber
\end{align}
and
\begin{align}
\label{eq2.3} 0&=(Y^{s,x}_t-S_t)^-
=-\int_s^t{\mathbf{1}}_{(-\infty,0]}
(Y^{s,x}_\theta-S_{\theta})\,dY^{s,x}_\theta \\
&\quad +\int_s^t{\mathbf{1}}_{(-\infty,0]}
(Y^{s,x}_\theta-S_{\theta})\,dS_\theta
+\frac12L^{0}_t(Y^{s,x}-S)\nonumber\\
&=\int_s^t{\mathbf{1}}_{\{Y^{s,x}_{\theta}=S_{\theta}\}}(-rY^{s,x}_\theta
\,d\theta+dK^{s,x}_{\theta}-Z^{s,x}_\theta \,dW_{\theta})\nonumber\\
&\quad+\int_s^t{\mathbf{1}}_{(K,\infty)}(X^{s,x}_\theta)
{\mathbf{1}}_{\{Y^{s,x}_{\theta}=S_{\theta}\}}((r-d)X^{s,x}_\theta
\,d\theta+\sigma X^{s,x}_\theta\,dW_{\theta})\nonumber\\
&\quad+\frac12\int_s^t{\mathbf{1}}_{\{Y^{s,x}_{\theta}=S_{\theta}\}}\,
dL_\theta^{0}(X^{s,x}-K) +\frac12L^{0}_t(Y^{s,x}-S). \nonumber
\end{align}
Write $I=\{u=g\}$ and observe that $(t,K)\notin I$ for all
$t\in[0,T)$, because $u=V$ by \cite[Proposition 2.3]{EKPPQ} and
hence $u$ is strictly positive. Consequently,
\[
\int_s^t{\mathbf{1}}_{\{Y^{s,x}_{\theta}=S_{\theta}\}}
\,dL_\theta^{0}(X^{s,x}-K)=0.
\]
Furthermore, from (\ref{eq3.02}) and Proposition 4.2 and Remark
4.3 in \cite{EKPPQ} it follows that $\sigma
X^{s,x}_t{\mathbf{1}}_{(K,\infty)}(X^{s,x}_t)=Z^{s,x}_t$ $P$-a.s.
on $\{Y^{s,x}_t=S_t\}$. From (\ref{eq2.3}) we therefore get
\begin{align*}
&K^{s,x}_t-K^{s,x}_{\tau}+\frac12L^{0}_t(Y^{s,x}-S)
-\frac12L^{0}_{\tau}(Y^{s,x}-S)\\
&\qquad=\int_{\tau}^tr{\mathbf{1}}_{\{Y^{s,x}_\theta
=S_\theta\}}S_\theta\,d\theta
-\int_{\tau}^t{\mathbf{1}}_{\{Y^{s,x}_\theta
=S_\theta\}}{\mathbf{1}}_{(K,\infty)}
(X^{s,x}_{\theta})(r-d)X^{s,x}_\theta\,d\theta\\
&\qquad=\int_{\tau}^t{\mathbf{1}}_{\{Y^{s,x}_\theta
=S_\theta\}}{\mathbf{1}}_{(K,\infty)}(X^{s,x}_{\theta})
((r-d)X^{s,x}_\theta-r(X^{s,x}_\theta-K)^+)^-\,d\theta.
\end{align*}
Hence
\begin{equation}
\label{eq2.6} K^{s,x}_t-K^{s,x}_{\tau}\le
\int_{\tau}^t{\mathbf{1}}_{\{Y^{s,x}_\theta
=S_\theta\}}{\mathbf{1}}_{(K,\infty)}(X^{s,x}_\theta)
((r-d)X^{s,x}_\theta-r(X^{s,x}_\theta-K)^+)^-\,d\theta.
\end{equation}
Since, by (\ref{eq3.01}), $Y^{s,x}$ is strictly positive,
$\{Y_{t}^{s,x}=g(X^{s,x}_{t})\}\subset \{X^{s,x}_{t}>K\}$ and
hence $K^{s,x}$ increases only on the set $\{X^{s,x}_t>K\}$.
Therefore (\ref{eq2.6}) forces (\ref{eq3.3}).
\end{proof}

\begin{proposition}
\label{prop2.2} There exists at most one solution of the problem
(\ref{eq1.8}).
\end{proposition}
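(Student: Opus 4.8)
The plan is to run the classical energy (monotonicity) argument for variational inequalities. Suppose $(u_1,\mu_1)$ and $(u_2,\mu_2)$ are two solutions of (\ref{eq1.8}) and set $w=u_1-u_2$. Since $u_1(T)=u_2(T)=g$ we have $w(T)=0$, while $w\in W_\varrho$. Subtracting the two strong-form identities gives, for every $\eta\in C_0^\infty(Q_T)$,
\[
\langle\partial_t w,\eta\rangle_{\varrho,T}+\langle L_{BS}w,\eta\rangle_{\varrho,T} =r\langle w,\eta\rangle_{2,\varrho,T}-\int_{Q_T}\eta\varrho^2\,d(\mu_1-\mu_2).
\]
Everything then reduces to proving $w\equiv0$: once $u_1=u_2$ the displayed identity forces $\int_{Q_T}\eta\varrho^2\,d\mu_1=\int_{Q_T}\eta\varrho^2\,d\mu_2$ for all $\eta\in C_0^\infty(Q_T)$, and as $\varrho^2>0$ this yields $\mu_1=\mu_2$.

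The decisive structural input is the sign of the measure term obtained on testing with $w$ itself. Writing $w=(u_1-g)+(g-u_2)$ and using the complementarity relation in (\ref{eq1.8})${}_2$, namely $\int_{Q_T}(u_1-g)\varrho^2\,d\mu_1=0$, together with $u_2\ge g$ and $\mu_1\ge0$, one gets $\int_{Q_T}w\varrho^2\,d\mu_1=\int_{Q_T}(g-u_2)\varrho^2\,d\mu_1\le0$. Symmetrically $\int_{Q_T}w\varrho^2\,d\mu_2=\int_{Q_T}(u_1-g)\varrho^2\,d\mu_2\ge0$. Subtracting,
\[
\int_{Q_T}w\varrho^2\,d(\mu_1-\mu_2)\le0.
\]
This is the only place where the obstacle structure enters the argument.

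Next I would test the subtracted equation with $w$ on each subinterval $[t,T]$. Let $a(\cdot,\cdot)$ be the spatial bilinear form associated with $L_{BS}$, so that $\langle L_{BS}u,\eta\rangle_{\varrho,T}=\int_0^Ta(u(s),\eta(s))\,ds$. Because $w$ and $\partial_tw$ lie in the spaces entering $\langle\cdot,\cdot\rangle_{\varrho,T}$ and the continuous function $w$ is integrable against the Radon measures $\mu_i$, a density/regularisation argument lets me pass from $\eta\in C_0^\infty(Q_T)$ to $w$ cut off to $[t,T]$. Combining this with the Lions integration-by-parts formula $\int_t^T\langle\partial_s w,w\rangle\,ds=\frac12(\|w(T)\|^2_{2,\varrho}-\|w(t)\|^2_{2,\varrho})$ and $w(T)=0$ produces the energy identity
\[
\tfrac12\|w(t)\|^2_{2,\varrho}=\int_t^Ta(w,w)\,ds-r\int_t^T\|w\|^2_{2,\varrho}\,ds+\int_{Q_{tT}}w\varrho^2\,d(\mu_1-\mu_2).
\]
The last term is $\le0$ by the monotonicity inequality above, $-r\int_t^T\|w\|^2_{2,\varrho}\,ds\le0$, and a weighted G\aa rding inequality $a(v,v)\le-c\|x\partial_xv\|^2_{2,\varrho}+C\|v\|^2_{2,\varrho}$ for the Black and Scholes form controls $\int_t^Ta(w,w)\,ds$. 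Discarding the nonpositive contributions leaves $\tfrac12\|w(t)\|^2_{2,\varrho}\le C\int_t^T\|w(s)\|^2_{2,\varrho}\,ds$, and a backward Gronwall argument with terminal value $w(T)=0$ forces $w\equiv0$, whence $u_1=u_2$ and then $\mu_1=\mu_2$ as explained above.

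I expect the main obstacle to be the rigorous justification of the energy identity rather than the algebra. One must (i) legitimately pass from test functions in $C_0^\infty(Q_T)$ to $w$ itself and control $a(w,w)$ in the degenerate weighted geometry, where the diffusion coefficient $x^2$ vanishes at the origin and grows at infinity; and (ii) establish the accompanying G\aa rding inequality in $H_\varrho$, absorbing the first-order drift term $(r-d)x\partial_xw$ and the terms arising from differentiating the weight $x^2\varrho^2$ into the principal part $\tfrac12\sigma^2\|x\partial_xw\|^2_{2,\varrho}$ via Young's inequality (this uses that $x\varrho'/\varrho$ is bounded for $\varrho\in\mathcal{R}$). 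The integrability of the continuous function $w$ against $\mu_1,\mu_2$, needed for the measure term to make sense, is the remaining technical point and is precisely what the choice of solution space $V=W_\varrho\cap C(Q_T)$ secures.
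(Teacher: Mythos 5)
Your proposal is correct and follows essentially the same route as the paper's proof: test the subtracted strong-form equation with $w=u_1-u_2$ (justified by regularization), use the complementarity condition to get $\int_{Q_{tT}}w\varrho^2\,d(\mu_1-\mu_2)\le0$, absorb the drift and weight terms into the principal part via $|x\partial_x\varrho/\varrho|\le C$ and Young's inequality, and conclude by backward Gronwall, recovering $\mu_1=\mu_2$ from the equation. You in fact spell out the sign argument for the measure term, which the paper only states, so no gaps.
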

\begin{proof}
Suppose that $(u_{1},\mu_{1})$, $(u_{2},\mu_{2})$ are two
solutions of (\ref{eq1.8}). Write $u=u_{1}-u_{2}$,
$\mu=\mu_{1}-\mu_{2}$. Then $(u,\mu)$ satisfies (\ref{eq2.02}) in
the strong sense. Since by standard regularization arguments we
can put $u$ as a test function in (\ref{eq2.02}) and obviously
(\ref{eq2.02}) is satisfied on $Q_{tT}$ for any $t\in[0,T)$, we
have
\begin{align*}
&\|u(t)\|^2_{2,\varrho}+\frac12\sigma^{2}\|x\partial_xu\|^{2}_{2,\varrho,t,T}
=\langle (\mu-d)x\partial_xu,u\rangle_{2,\varrho,t,T}
+\sigma^{2}\langle\partial_xu,xu\rangle_{2,\varrho,t,T}\\
&\qquad
+\sigma^{2}\langle\partial_xu,x^{2}u\partial_x\varrho,\varrho\rangle_{2,t,T}
+r\|u\|^{2}_{2,\varrho,t,T}+\int_{Q_{tT}}u\,d\mu.
\end{align*}
From the above, the fact that $\int_{Q_{tT}}u\,d\mu\le 0$,
$|\partial_x\varrho|\le C\varrho$ and the elementary inequality
$ab\le\varepsilon a^{2}+\varepsilon^{-1}b^{2}$ we get
\[
\|u(t)\|^2_{2,\varrho}\le
C\int_{t}^{T}\|u(s)\|^{2}_{2,\varrho}\,ds,\quad t\in[0,T].
\]
By Gronwall's lemma, $u=0$, and in consequence, $\mu=0$.
\end{proof}
\medskip

Given $\delta>0$ write $D^{+}_{\delta}=(0,T)\times (\delta,
+\infty)$, $D^{-}_{\delta}=(0,T)\times (-\infty,\delta)$ and
$D^{+}=D^{+}_{0}$, $D^{-}=D^{-}_{0}$, $D=D^{+}\cup D^{-}$. Note
that from the well known explicit formula for $X^{s,x}$ it follows
that $X^{s,x}_{t}\in D^{+}$, $t\in[s,T]$, $P$-a.s. if $x>0$, and
$X^{s,x}_{t}\in D^{-}$, $t\in[s,T]$, $P$-a.s. if $x<0$. Note also
that if $x\neq 0$ and  $t>s$  then the  distribution density of
the random variable $X_t^{s,x}$ is given by the formula
\begin{equation}
\label{eqa.0}
p(s,x,t,y)=\frac{1}{y\sqrt{2\pi(t-s)}}\exp(\frac{-(\ln\frac{y}{x}
+(\frac{\sigma^{2}}{2}-r+d)(t-s))^{2}}{t-s})
{\mathbf{1}}_{\{\frac{y}{x}>0\}}.
\end{equation}
It follows in particular that for fixed $s\in[0,T)$, $x\neq 0$ and
$\delta\in (0,T-s]$ the function $p(s,x,\cdot,\cdot)$ is bounded
on $Q_{s+\delta,T}$.

\begin{theorem}
\label{th2.3}
$(i)$ There exists a unique solution $(u,\mu)$ of the problem
(\ref{eq1.8}).\\
$(ii)$ Let $x\neq0$ and let $(Y^{s,x},Z^{s,x},K^{s,x})$ be a
solution of RBSDE (\ref{eq1.3}). Then
\[
(Y^{s,x}_t,Z^{s,x}_t)=(u(t,X_{t}^{s,x}),\sigma\partial_x
u(t,X_{t}^{s,x})),\quad t\in[s,T],\quad P\mbox{-}a.s.
\]
and  for any $\eta\in C_{0}(Q_{sT})$,
\begin{equation}
\label{eq2.7} E\int_{s}^{T}\eta(t,X_t)\,dK^{s,x}_t
=\int_{Q_{sT}}\eta(t,y) p(s,x,t,y)\,d\mu(t,y).
\end{equation}
\end{theorem}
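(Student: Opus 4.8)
The plan is to reduce both assertions to a single key fact: that the reflecting term $K^{s,x}$ in (\ref{eq1.3}) is absolutely continuous with $dK^{s,x}_t=q(X^{s,x}_t,Y^{s,x}_t)\,dt$. I begin by recording the identifications supplied by \cite{EKPPQ}: the RBSDE (\ref{eq1.3}) has a unique solution, $Y^{s,x}_t=u(t,X^{s,x}_t)$ for the continuous viscosity solution $u$ of (\ref{eq1.5}) (this is (\ref{eq3.01})), and matching the martingale parts in It\^o's formula for $u(t,X^{s,x}_t)$ identifies $Z^{s,x}$ with the (weighted) spatial derivative of $u$ along $X^{s,x}$, as in the proof of Proposition~\ref{prop2.1}. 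I would also verify here that $u\in V=W_\varrho\cap C(Q_T)$, using the weighted Sobolev estimates of \cite{EKPPQ} together with the boundedness of the density $p$ in (\ref{eqa.0}) on $Q_{s+\delta,T}$; the regularity and this identification are precisely the first display of part $(ii)$.

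The heart of the matter is the absolute continuity of $K^{s,x}$. Writing $S_t=g(X^{s,x}_t)$, Proposition~\ref{prop2.1} already gives $dK^{s,x}_t\le q(X^{s,x}_t,Y^{s,x}_t)\,dt$ (on $\{Y^{s,x}=S\}$ one has $q(X^{s,x},Y^{s,x})=(dX^{s,x}-rK)^+$, and $K^{s,x}$ grows only there), so it remains to produce the reverse inequality. The route I would take avoids the free boundary: construct a solution $(\hat Y,\hat Z)$ of the non-reflected BSDE (\ref{eq1.7}) and identify it with $(Y^{s,x},Z^{s,x})$ by uniqueness. Since $q(x,\cdot)$ is bounded and nonincreasing (dropping from $(dx-rK)^+$ to $0$ as $y$ crosses $g(x)$), existence and uniqueness for (\ref{eq1.7}) follow by monotone approximation of $q$ by Lipschitz drivers combined with the comparison theorem. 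The key point is $\hat Y\ge g(X^{s,x})$, which I would get by showing $\hat Y\ge0$ and $\hat Y\ge X^{s,x}-K$ separately: along each barrier the driver of (\ref{eq1.7}) dominates the barrier's own driver, e.g.\ at $\hat Y=X^{s,x}-K$ (with $X^{s,x}>K$) this reduces to $(rK-dX^{s,x})+(dX^{s,x}-rK)^+\ge0$. Setting $\hat K_t=\int_s^t q(X^{s,x}_\theta,\hat Y_\theta)\,d\theta$, the triple $(\hat Y,\hat Z,\hat K)$ satisfies (\ref{eq1.3}): the equation matches, $\hat K$ is continuous, increasing, $\hat K_s=0$, and the flat-off condition holds because $q(X^{s,x},\hat Y)>0$ forces $\hat Y\le g(X^{s,x})$, hence $\hat Y=g(X^{s,x})$, on the support of $d\hat K$. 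Uniqueness for (\ref{eq1.3}) then gives $dK^{s,x}_t=q(X^{s,x}_t,Y^{s,x}_t)\,dt$. (Equivalently, one upgrades Proposition~\ref{prop2.1} to an equality by noting that the defect is exactly $\tfrac12 L^0(Y^{s,x}-g(X^{s,x}))$ and that this local time vanishes, the vanishing being forced by the identification $Z^{s,x}=\sigma X^{s,x}\mathbf{1}_{(K,\infty)}(X^{s,x})$ on $\{Y^{s,x}=S\}$, which makes the martingale part of $Y^{s,x}-g(X^{s,x})$ degenerate on the contact set.)

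Granting $dK^{s,x}_t=q(X^{s,x}_t,Y^{s,x}_t)\,dt$, existence in part $(i)$ follows by taking $u$ as above and defining $\mu$ through (\ref{eq1.9}), $d\mu(t,x)=q(x,u(t,x))\,dt\,dx$, a nonnegative Radon measure since $q\ge0$ is locally bounded. The pair $(u,\mu)$ solves (\ref{eq1.8}): as $(Y^{s,x},Z^{s,x})$ now solves (\ref{eq1.7}), the BSDE-to-PDE correspondence in the weighted spaces yields $\LL_{BS}u=ru-q(\cdot,u)$, which is (\ref{eq1.8})${}_1$ for this $\mu$, and since the right-hand side is absolutely continuous with locally bounded density the weak solution is in fact strong. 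The conditions $u(T)=g$ and $u\ge g$ are inherited from the terminal and obstacle conditions of (\ref{eq1.3}), and the complementarity $\int_{Q_T}(u-g)\varrho^2\,d\mu=0$ holds because $q(x,u)>0$ implies $u\le g(x)$, whence $u=g(x)$. Uniqueness is Proposition~\ref{prop2.2}.

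Finally, (\ref{eq2.7}) is immediate: for $x\ne0$ the process $X^{s,x}$ stays in $D^+$ (or $D^-$) and $p(s,x,\cdot,\cdot)$ is a genuine, $Q_{s+\delta,T}$-bounded transition density, so for $\eta\in C_0(Q_{sT})$,
\[
E\int_s^T\eta(t,X^{s,x}_t)\,dK^{s,x}_t
=E\int_s^T\eta(t,X^{s,x}_t)\,q(X^{s,x}_t,u(t,X^{s,x}_t))\,dt
=\int_{Q_{sT}}\eta(t,y)\,q(y,u(t,y))\,p(s,x,t,y)\,dy\,dt,
\]
where the last step is Fubini together with $Ef(X^{s,x}_t)=\int f(y)\,p(s,x,t,y)\,dy$; by (\ref{eq1.9}) this equals $\int_{Q_{sT}}\eta\,p\,d\mu$. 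The main obstacle is thus the lone step $dK^{s,x}_t=q(X^{s,x}_t,Y^{s,x}_t)\,dt$, i.e.\ the reverse of Proposition~\ref{prop2.1}: in the BSDE route the delicate issue is the existence/comparison theory for (\ref{eq1.7}) with the \emph{discontinuous} driver $q$, which rests entirely on its monotonicity in $y$; in the analytic route the delicate issue is the vanishing of $L^0(Y^{s,x}-g(X^{s,x}))$, i.e.\ smooth fit, here extracted from the identification of $Z^{s,x}$ rather than from an analysis of the free boundary.
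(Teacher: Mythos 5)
Your reduction of everything to the single identity $dK^{s,x}_t=q(X^{s,x}_t,Y^{s,x}_t)\,dt$ is an attractive plan, but that is exactly the step your argument does not actually deliver, and it is where the gap lies. In the monotone-approximation route, take Lipschitz drivers $q_n$ decreasing to $q$; comparison gives a monotone sequence $\hat Y^n\to\hat Y$, and away from the contact set the drivers stabilize, but on $\{\hat Y=g(X)\}$ --- precisely where $q$ is discontinuous and where all of $K^{s,x}$ lives --- the value of $q_n(X_t,\hat Y^n_t)$ depends on the relative rates at which $\hat Y^n_t-g(X_t)$ and $1/n$ tend to zero, so the weak limit of the drivers is only some process $\alpha_t(dX_t-rK)^+\mathbf{1}_{\{\hat Y_t=g(X_t)\}}$ with an unidentified $\alpha\in[0,1]$. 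The limiting triple then solves RBSDE (\ref{eq1.3}) with $d\hat K=\alpha(dX-rK)^+\mathbf{1}_{\{\hat Y=g(X)\}}\,dt$; your flat-off verification is insensitive to $\alpha$, and monotonicity of $q(x,\cdot)$ gives uniqueness for (\ref{eq1.7}) but cannot identify the driver on the contact set. In other words, you recover Proposition \ref{prop2.1} and the paper's (\ref{eqa.13}), not (\ref{eq1.6}). The paper is structured around exactly this difficulty: Theorem \ref{th2.3} establishes only the absolutely continuous bound (\ref{eqa.13}), and the identification $\alpha=1$ is deferred to Theorem \ref{th3.1}(i), where it is proved analytically by testing the strong-form equation with functions supported in the interior of $I=\{u=g\}$, exploiting that $g$ is affine there (so the equation forces $(1-\alpha)f^-=0$ with $f=(rK-dx)\mathbf{1}_{[K,\infty)}$) together with Remark \ref{rema.1} that $\partial I$ is Lebesgue-null. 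Nothing in your proposal substitutes for that computation.

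Your parenthetical fallback is gapped for a related reason: the identification $Z^{s,x}=\sigma X^{s,x}\mathbf{1}_{(K,\infty)}(X^{s,x})$ holds only \emph{on} the set $\{Y^{s,x}=S\}$, whereas $L^0(Y^{s,x}-S)$ is controlled by the quadratic variation of $Y^{s,x}-S$ in shrinking neighbourhoods of zero, i.e.\ by $\partial_xu-g'$ just \emph{off} the contact set. Indeed, the occupation-times formula gives $\int_s^t\mathbf{1}_{\{Y^{s,x}_\theta=S_\theta\}}\,d\langle Y^{s,x}-S\rangle_\theta=0$ for any continuous semimartingale, so degeneracy of the martingale part on the level set is automatic and carries no information; vanishing of the local time is equivalent to smooth fit across the free boundary, which is precisely the free-boundary analysis the paper advertises avoiding. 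Separately, even granting your key identity, your treatment of part (i) and of the first display in (ii) is thinner than it can afford to be: the paper's existence proof runs a penalization scheme in which interior $C^{1,2}$ regularity of $u_n$ (via uniform ellipticity on $D^+_\delta$ and classical parabolic theory), a Malliavin-calculus identification $Z^{s,x,n}_t=\sigma X^{s,x}_t\partial_xu_n(t,X^{s,x}_t)$, and the uniform estimates (\ref{eqa.10})--(\ref{eqa.11}) are what yield $u\in W_\varrho$, the weak formulation, and the correspondence (\ref{eq2.7}) in the limit; your appeal to ``matching martingale parts in It\^o's formula'' presupposes regularity of $u$ that is itself the main content of that machinery. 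The final Fubini computation for (\ref{eq2.7}) is fine, modulo the paper's small care of first taking $\eta\in C_0(Q_{s+\delta,T})$ where $p(s,x,\cdot,\cdot)$ is bounded.
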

\begin{proof}
By \cite[Theorem 2.2]{Pa} for each $n\in\BN$ there exists a unique
viscosity solution $u_n$ of the following penalized problem
\begin{equation}
\label{eqa.9} \frac{\partial u_{n}}{\partial
t}+\LL_{BS}u_{n}=ru_{n}-n(u_{n}-g)^{-},\quad u_{n}(T)=g.
\end{equation}
Let $(Y^{s,x,n},Z^{s,x,n})$ denote a solution of the BSDE
\begin{align*}
Y^{s,x,n}_{t}&=g(X^{s,x}_{T})-\int_{t}^{T}rY^{s,x,n}_{\theta}\,d\theta
+\int_{t}^{T}n(Y^{s,x,n}_{\theta}-g(X^{s,x}_{\theta}))^{-}\,d\theta\\
&\quad-\int_{t}^{T}Z^{s,x,n}_{\theta}\,dW_{\theta}.
\end{align*}
Using standard arguments one can show that $x\mapsto EY^{s,x,n}_s$
is Lipschitz continuous uniformly in $s$. Therefore $u_n$ has the
same regularity, because by \cite[Theorem 2.2]{Pa},
$Y^{s,x,n}_{t}=u_{n}(t,X^{s,x}_{t})$, $t\in [s,T]$, $P$-a.s., and
hence  $u_n(s,x)=EY^{s,x,n}_s$.  Since the operator $L_{BS}$ is
uniformly elliptic on each domain $D^{+}_{\delta}$, for each
$\delta>0$ there is a unique weak solution $v_{\delta}$ of the
following terminal-boundary value problem
\[
\frac{\partial v_{\delta}}{\partial t}+\LL_{BS}v_{\delta}
=rv_{\delta}-n(v_{\delta}-g)^{-},\,\,v_{\delta}(T)=g, \,\,
v_{\delta}(t,x)=u_n(t,x) \mbox{ on }[0,T]\times\{\delta\}
\]
(see \cite[Theorem V.6.1]{Lad}). Since $v_{\delta}$ is a viscosity
solution of the above problem as well,
$v_{\delta}=u_{n|D^{+}_{\delta}}$ by uniqueness. Using this,
Lipschitz continuity of $u_{n}$ and \cite[Theorem 1.5.9]{Friedman}
we conclude that $u_{n}\in C^{1,2}(D)$. Hence, by Proposition
1.2.3 and Theorem 2.2.1 in \cite{Nualart},
$Y^{s,x,n}_{t}\in{\mathbb{D}}^{1,2}$ for every $(s,x)\in Q_{T}$
such that $x\neq0$,  where ${\mathbb{D}}^{1,2}$ is the domain of
the derivative operator in ${\mathbb{L}}_2(\Omega)$ (see
\cite[Section 1.2]{Nualart} for a precise definition).
Consequently, applying once again Proposition 1.2.3 and Theorem
2.2.1 in \cite{Nualart} and using the fact that $g$ and $x\mapsto
x^{-}$ are Lipschitz continuous functions we conclude that if
$x\neq0$ then $g(X^{s,x}_{T})$,
$\int_{t}^{T}rY^{s,x,n}_{\theta}\,d\theta$,
$\int_{t}^{T}n(Y^{s,x,n}_{\theta}
-g(X^{s,x}_{\theta}))^{-}\,d\theta\in{\mathbb{D}}^{1,2}$.
Moreover, by \cite[Proposition 1.2.3]{Nualart} and \cite[Lemma
5.1]{EPQ}, there exists an adapted bounded process $A$ such that
for every $s<\tau\le t$,
\begin{align*}
D_{\tau}Y_{t}^{s,x,n}&=Z^{s,x,n}_{\tau}
+\int_{\tau}^{t}D_{\tau}Z^{s,x,n}_{\theta}\,d\theta
+r\int_{\tau}^{t}D_{\tau}Y^{s,x,n}_{\theta}\,d\theta\\
&\quad-n\int_{\tau}^{t}A_{\theta}
D_{\tau}(Y^{s,x,n}_{\theta}-g(X_{\theta}))\,d\theta,
\end{align*}
where $D_{\tau}$ denotes the derivative operator. From this it
follows in particular that
\[
D_{t}Y^{s,x,n}_{t}=Z^{s,x,n}_{t},\quad P\mbox{-}a.s.
\]
for every $t\in[s,T]$. On the other hand, by remarks following the
proof of Proposition \ref{prop2.2} and remark following the proof
of \cite[Proposition 1.2.3]{Nualart},
\[
D_{\tau}Y_{t}^{s,x,n} =\partial_x
u_{n}(t,X^{s,x}_{t})D_{\tau}X^{s,x}_{t},\quad P\mbox{-}a.s.
\]
for every $r,t\in [s,T]$. Moreover, by \cite[Theorem
2.2.1]{Nualart}, $D_{t}X^{s,x}_{t}=\sigma X_{t}^{s,x}$. Thus, if
$x\neq0$, then
\[
Z^{s,x,n}_{t}=\sigma X^{s,x}_{t}\partial_x u_{n}(t,X_{t}),\quad
P\mbox{-}a.s..
\]
By results from Section 6 in \cite{EKPPQ} and standard estimates
for diffusions we have
\begin{align}
\label{eqa.10} &E\sup_{s\le t\le T}|u_{n}(t,X^{s,x}_{t})|^{2}
+E\int_{s}^{T}|\sigma X^{s,x}_t\partial_x
u_{n}(t,X^{s,x}_t)|^{2}\,dt\\
&\qquad\le CE\sup_{s\le t\le T}|g(X^{s,x}_{t})|^{2}\le
C|x|^{2}.\nonumber
\end{align}
By the above and \cite[Proposition 5.1]{BM} it follows that
$u_{n}\in \BL_{2}(0,T;H_{\varrho})$. Accordingly, $u_{n}$ is a
weak solution of (\ref{eqa.9}). Furthermore, from results proved
in \cite[Section 6]{EKPPQ} it follows that for every $(s,x)\in
Q_{T}$,
\begin{align}
\label{eqa.11} &E\sup_{s\le t\le T}
|(u_{n}-u_{m})(t,X^{s,x}_{t})|^2 +E\int_{s}^{T}|\sigma
X^{s,x}_t\partial_x (u_{n}-u_{m})(t,X^{s,x}_t)|^2\,dt \\
&\qquad+ E\sup_{s\le t\le T}
|K^{s,x,n}_{t}-K^{s,x,m}_{t}|^{2}\rightarrow 0\nonumber
\end{align}
as $m,n\rightarrow\infty$. From (\ref{eqa.10}), (\ref{eqa.11}) and
\cite[Proposition 5.1]{BM}  we conclude that there exists $u\in
C(Q_{T})\cap \BL_{2}(0,T;H_{\varrho})$ such that $u_{n}\rightarrow
u$ uniformly on compact subsets of $Q_{T}$, $u_{n}\rightarrow u$
in $\BL_{2}(0,T;H_{\varrho})$ and $u_{n}\rightarrow u$ in
$C([0,T],{\mathbb{L}}_{2,\varrho}(\mathbb{R}))$. Moreover, using
(\ref{eqa.10}) and  once again \cite[Proposition 5.1]{BM} we see
that $\|u_{n}\|_{\BL_{2}(0,T;H_{\varrho})}\le C$. Therefore from
(\ref{eqa.9}) it follows that the sequence of measures $\{\mu_n\}$
defined by $d\mu_{n}=n(u_{n}-g)^{-}\,d\lambda$, $n\in\BN$,  where
$\lambda$ is the 2-dimensional Lebesgue measure, is tight. If
$\mu_{n}\rightarrow\mu$ weakly, which we may assume, then letting
$n\rightarrow\infty$ in (\ref{eqa.9}) we conclude that the pair
$(u,\mu)$ satisfies equation (\ref{eq2.02}) in the weak sense and
that
\[
u(t,X_{t}^{s,x})=Y^{s,x}_{t},\, t\in [s,T],\, P\mbox{-}a.s.,\quad
Z^{s,x}_{t} =\sigma X_{t}^{s,x}\partial_x u(t,X_{t}^{s,x}),\,
dt\otimes P\mbox{-}a.s.
\]
because in \cite[Section 6]{EKPPQ} it is proved that
$Y^{s,x,n}_t\rightarrow Y^{s,x}_t$, $t\in[s,T]$, $P$-a.s. and
$E\int^T_s|Z^{s,x,n}_t-Z^{s,x}_t|^2\,dt\rightarrow0$. In
particular, it follows from the above that $u\ge g$. Let $\eta\in
C_{0}(Q_{T})$. Since $u_{n}\rightarrow u$ uniformly,
\[
\int_{Q_{T}}(u_{n}-g)\eta\,d\mu_{n}\rightarrow
\int_{Q_{T}}(u-g)\eta\,d\mu\ge 0.
\]
On the other hand,
\[
\int_{Q_{T}}(u_{n}-g)\eta\,d\mu_{n}
=-\int_{Q_{T}}n((u_{n}-g)^{-})^{2}\,d\lambda\le 0.
\]
From this we get (\ref{eq2.01}). Furthermore, if $x\neq0$ then for
any $\delta\in (0,T-s)$ and $\eta\in C_{0}(Q_{s+\delta,T})$ we
have
\begin{equation}
\label{eq2.12} E\int_{s}^{T}\eta(t,X_t^{s,x})\,dK^{s,x,n}_t
=\int_{Q_{sT}}\eta(t,y)p(s,x,t,y)\,d\mu_{n}(t,y).
\end{equation}
Since it is known that $K^{s,x,n}_t\rightarrow K^{s,x}_t$
uniformly in $t\in[s,T]$ in probability (see \cite[Section
6]{EKPPQ}), letting $n\rightarrow\infty$ in (\ref{eq2.12}) and
using (\ref{eqa.0}), (\ref{eqa.11}) we get (\ref{eq2.7}) for
$\eta\in C_{0}(Q_{s+\delta,T})$, and hence for any $\eta\in
C_{0}(Q_{sT})$. In order to complete the proof we have to show
that $u\in W_{\varrho}$. Since $p(s,x,\cdot,\cdot)$ is positive
for every $(s,x)\in Q_{T}$ such that $x\neq0$, it follows from
(\ref{eq2.7}) and Proposition \ref{prop2.1} that
$d\mu\le\fch_{\{u=g\}}(t,x) (dx-rK)^{+}\,d\lambda$, i.e. for every
$\eta\in C_{0}^{+}(Q_{T})$,
\[
\int_{Q_{T}}\eta(t,x)\,d\mu(t,x)\le
\int_{Q_{T}}\eta(t,x)\fch_{\{u=g\}}(t,x)(dx-rK)^{+}\, dx\,dt.
\]
Hence there exists a measurable function $\alpha$ on $Q_T$ such
that $0\le\alpha\le 1$ and
\begin{eqnarray}
\label{eqa.13}
\frac{d\mu}{d\lambda}(t,x)=\alpha(t,x)\fch_{\{u=g\}}(t,x)(dx-rK)^{+}.
\end{eqnarray}
This implies that $u\in W_{\varrho}$ and $u$ satisfies
(\ref{eq2.02}) in the strong sense, i.e. $(u,\mu)$ is a solution
of (\ref{eq1.8}).
\end{proof}

\begin{remark}
\label{rema.1} {\rm It is known  that $\{u=g\}=\{(t,x)\in Q_T:
x\ge s(t)\}$ for some nonincreasing function $s$ in the case of
call option and $\{u=g\}=\{(t,x)\in Q_T: 0\le x\le s(t)\}$ for
some nondecreasing $s$ in the case of put option (see, e.g.,
\cite[Proposition 2.7.6]{KS}). It follows that in both cases the
2-dimensional Lebesgue measure of the boundary of $\{u=g\}$ equals
zero. }
\end{remark}

\section{Linear RBSDEs and nonlinear BSDEs} \label{sec3}

We begin with proving the key formulas (\ref{eq1.6}),
(\ref{eq1.9}). As first application we will show the
semimartingale representation for the Snell envelope of the
discounted payoff process and the early exercise premium
representation for $V$.
\begin{theorem}
\label{th3.1} $(i)$  If $(u,\mu)$ is a solution of the obstacle
problem (\ref{eq1.8}), then $\mu$ is given by (\ref{eq1.9}). \\
$(ii)$ If $(Y^{s,x},Z^{s,x},K^{s,x})$ is a solution of %RBSDE
(\ref{eq1.3}), then $K^{s,x}$ is given by (\ref{eq1.6}).
\end{theorem}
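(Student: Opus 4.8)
The plan is to establish (i) first, by identifying the density of $\mu$ on the coincidence set $I=\{u=g\}$ through the equation (\ref{eq1.8})${}_1$, and then to deduce (ii) from (i) together with the representation (\ref{eq2.7}) and the one-sided estimate of Proposition~\ref{prop2.1}. The proof of Theorem~\ref{th2.3} already yields (\ref{eqa.13}), i.e. $d\mu=\alpha\,\mathbf{1}_{I}(dx-rK)^{+}\,d\lambda$ with $0\le\alpha\le1$; in particular $\mu$ is absolutely continuous and, since $\mu\ge0$ and $\int(u-g)\varrho^{2}\,d\mu=0$ with $u\ge g$, it is supported on $I$. Thus (i) reduces to showing that the density of $\mu$ equals $dx-rK$ a.e. on $I$. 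Recalling that $u\ge g$ gives $q(x,u)=(dx-rK)^{+}\mathbf{1}_{I}$, this is exactly (\ref{eq1.9}).

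For part (i) I would argue on each strip $D^{+}_{\delta}$ (for the call), where $\LL_{BS}$ is uniformly elliptic. Since $d\mu/d\lambda$ is locally bounded by (\ref{eqa.13}) and $u\in C(Q_T)$, the right-hand side $ru-d\mu/d\lambda$ of $\LL_{BS}u=ru-\mu$ is locally bounded, so interior parabolic $L^{p}$-regularity shows that $\partial_{t}u,\partial_{x}u,\partial^{2}_{xx}u$ exist as locally $L^{p}$ functions on $D$. Because $u=V>0$, one has $I\subset\{x>K\}$, where $g(x)=x-K$ is smooth, so the kink of $g$ and the point $x=0$ play no role. Applying to $w=u-g$ the standard fact that the weak gradient of a Sobolev function vanishes a.e. on each of its level sets --- once to $w$ and then to the components of $\partial w$ --- gives $\partial_{t}u=0$, $\partial_{x}u=1$ and $\partial^{2}_{xx}u=0$ a.e. on $I$. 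Substituting into $\LL_{BS}u=ru-d\mu/d\lambda$ yields $d\mu/d\lambda=r(x-K)-(r-d)x=dx-rK$ a.e. on $I$; since $\mu\ge0$ this forces $dx-rK\ge0$ on $I$, hence $d\mu/d\lambda=(dx-rK)^{+}\mathbf{1}_{I}=q(\cdot,u)$, which is (\ref{eq1.9}). The put case is symmetric.

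Granting (i), part (ii) is short. Writing $k_{t}=(dX^{s,x}_{t}-rK)^{+}\mathbf{1}_{\{Y^{s,x}_{t}=S_{t}\}}$ and using that $p(s,x,\cdot,\cdot)$ is the density of $X^{s,x}_{\cdot}$, (\ref{eq2.7}) and (\ref{eq1.9}) give, for every $\eta\in C_{0}(Q_{sT})$,
\[
E\int_{s}^{T}\eta(t,X^{s,x}_{t})\,dK^{s,x}_{t}
=\int_{Q_{sT}}\eta(t,y)\,p(s,x,t,y)\,q(y,u(t,y))\,dt\,dy
=E\int_{s}^{T}\eta(t,X^{s,x}_{t})\,k_{t}\,dt,
\]
since $q(X^{s,x}_{t},Y^{s,x}_{t})=k_{t}$. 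By Proposition~\ref{prop2.1}, $dK^{s,x}_{t}\le k_{t}\,dt$ pathwise, so $d\nu_{t}:=k_{t}\,dt-dK^{s,x}_{t}$ is a nonnegative random measure on $[s,T]$ with $E\int_{s}^{T}\eta(t,X^{s,x}_{t})\,d\nu_{t}=0$ for all such $\eta$. Choosing $\eta=\eta_{N}\uparrow1$ (with $\eta_{N}\in C_{0}$ depending only on $y$ and increasing to $1$ on $(0,\infty)$, which exhausts the range of $X^{s,x}$) and applying monotone convergence gives $E\,\nu([s,T])=0$, whence $\nu\equiv0$ $P$-a.s. and $dK^{s,x}_{t}=k_{t}\,dt$, which is (\ref{eq1.6}).

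The main obstacle is the identification step in (i): passing from the distributional identity $\mu=ru-\LL_{BS}u$ to a pointwise computation of the density on $I$. This requires the interior regularity making $\partial^{2}_{xx}u$ an honest function, together with the a.e.\ vanishing of the first and second weak derivatives of $u-g$ on the coincidence set. Everything else --- the reduction via (\ref{eqa.13}), the symmetry between call and put, and the monotone-convergence upgrade in (ii) from the tested identity to the pathwise formula --- is routine given Theorem~\ref{th2.3} and Proposition~\ref{prop2.1}.
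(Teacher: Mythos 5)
Your proposal is correct, but it takes a genuinely different route from the paper's in both halves, so a comparison is worthwhile. For part (i), the paper tests the strong equation against $\eta\in C_0^\infty(I_0)$, where $I_0$ is the \emph{interior} of the coincidence set $I=\{u=g\}$: there $u\equiv g$, so all derivatives are explicit, and the resulting pointwise identity $f=-\alpha f^-$ is then propagated from $I_0$ to $I$ via Remark \ref{rema.1}, i.e.\ the fact that $\partial I$ is Lebesgue-null --- a structural free-boundary input imported from \cite{BKR1}. Your route --- interior parabolic $L^p$ regularity for $\LL_{BS}u=ru-d\mu/d\lambda$ with locally bounded right-hand side, followed by the Stampacchia level-set lemma applied twice to $w=u-g$ --- identifies the density a.e.\ on all of $I$ directly, so it avoids the free-boundary fact altogether, at the mild cost of invoking Calder\'on--Zygmund-type interior estimates; both arguments end with the same observation that nonnegativity of $\mu$ forces $dx\ge rK$ a.e.\ on the relevant part of $I$. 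For part (ii), the paper embeds everything in the Markov family $\{(X,P_{s,x})\}$, shows that $K$ and $\tilde K$ are additive functionals corresponding to the \emph{same} measure $\mu$ via (\ref{eq3.2}), and concludes by the uniqueness argument of \cite[Proposition 4.4]{K}; your argument --- pathwise domination $dK^{s,x}_t\le k_t\,dt$ from Proposition \ref{prop2.1}, equality of the corresponding expectations from (\ref{eq2.7}) together with part (i), then monotone convergence with $\eta_N\uparrow1$ to kill the nonnegative difference $\nu$ --- is more elementary and self-contained, exploiting Proposition \ref{prop2.1} a second time where the paper uses it only to obtain (\ref{eqa.13}). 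Two small repairs are needed: your claim $I\subset\{x>K\}$ fails on $\{x\le0\}$ (for the call $u=g=0$ there, so these points lie in $I$), but since $(dx-rK)^+=0$ there, (\ref{eq1.9}) holds trivially and your regularity argument only needs to run on $I\cap\{dx>rK\}$, which is indeed contained in $\{x>K\}$ away from $t=T$; and in (ii) the case $x=0$ should be disposed of separately (there $X^{s,x}\equiv0$ and $K^{s,x}\equiv0$), because (\ref{eq2.7}) is available only for $x\neq0$.
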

\begin{proof}
We prove the theorem in the case of call option. The proof for put
option requires only some obvious changes and is left to the
reader.

Suppose that $(Y^{s,x},Z^{s,x},K^{s,x})$ is a solution of
(\ref{eq1.3}) and $(u,\mu)$ is a solution of (\ref{eq1.8}). By
(\ref{eqa.13}), $u$ solves  the equation
\begin{equation}
\label{eq3.6}
\partial_tu+(r-d)x\partial_xu+\frac12\sigma^2x^2\partial^2_{xx}u
=ru-\alpha(t,x){\mathbf{1}}_{\{u=g\}}(t,x)(dx-rK)^+
\end{equation}
in the strong sense. Let $I=\{u=g\}$ and $I_0=$Int\,$I$. If
$I_0\neq\emptyset$ then by (\ref{eq3.6}), for any $\eta\in
C_0^\infty(I_0)$ we have
\begin{align*}
&\int_{Q_T}u(t,x)\partial_t\eta(t,x)\,dt\,dx -\frac12\int_{Q_T}
\sigma^2x^2\partial^2_{xx}u
(t,x)\eta(t,x)\,dt\,dx\\
&\qquad\qquad\qquad-\int_{Q_T}(r-d)x\partial_xu(t,x)\eta(t,x)\,dt\,dx\\
&\qquad=\int_{Q_T}(-ru(t,x)+\alpha(t,x){\mathbf{1}}_{\{u=g\}}(t,x)
(dx-rK)^+)\eta(t,x)\,dt\,dx\\
&\qquad\quad+\int_{\mathbb{R}}g(x)\eta(T,x)\,dx
-\int_{\mathbb{R}}u(0,x)\eta(0,x)\,dx.
\end{align*}
Since supp\,$\eta\subset I_0$ and $g$ is regular on $I_0$, we
deduce from the above that
\begin{align*}
&\int_{I_0}(r-d)x{\mathbf{1}}_{[K,\infty)}(x)\eta(t,x)\,dt\,dx
=\int_{I_0}rg(x)\eta(t,x)\,dt\,dx\\
&\qquad-\int_{I_0}\alpha(t,x)
{\mathbf{1}}_{\{u=g\}}(t,x)(dx-rK)^+\eta(t,x)\,dt\,dx.
\end{align*}
Equivalently, we have
\begin{align*}
&\int_{I_0}f(t,x)\eta(t,x)\,dt\,dx\\
&\quad=\int_{I_0}\alpha(t,x){\mathbf{1}}_{\{u=g\}}(t,x)
{\mathbf{1}}_{[K,\infty)}(x)(dx-rK)^+\eta(t,x)\,dt\,dx,
\end{align*}
where $f(t,x)=(r-d)x{\mathbf{1}}_{[K,\infty)}(x)-r(x-K)^+
=(-dx+rK){\mathbf{1}}_{[K,\infty)}(x)$. Since
\begin{align*}
\alpha(t,x)(dx-rK)^+&=-\alpha(t,x)((r-d)x{\mathbf{1}}_{[K,\infty)}(x)
-r(x-K)^+)^-\\
&=-\alpha(t,x)f^-(x)
\end{align*}
on $I_0$, it follows that
\[
\int_{I_0}f(t,x)\eta(t,x)\,dt\,dx
=-\int_{I_0}\alpha(t,x)f^-(t,x)\eta(t,x)\,dt\,dx
\]
for any $\eta\in C_0^\infty(I_0)$. Hence
$f(t,x)=-\alpha(t,x)f^-(t,x)$ a.e. on $I_0$. Since $f=f^+-f^-$, we
have $f^+(t,x)=(1-\alpha(t,x))f^-(t,x)$, and consequently,
$(1-\alpha(t,x))f^-(t,x)=0$ a.e. on $I_0$, i.e.
$\alpha(t,x)(dx-rK)^+=(dx-rK)^+$ a.e. on $I_0$. Since by Remark
\ref{rema.1} the Lebesgue measure of $\partial I$ equals zero, the
above equality holds a.e. on $I$, which in view of (\ref{eqa.13})
completes the proof of (i).

In case $x=0$ part (ii) is trivial since in that case
$X^{s,x}_t=K^{s,x}_t=0$, $t\in[s,T]$. In case $x\neq0$ part (ii)
follows from part (i) and results proved in \cite{K}. To see this,
let us denote by $X$ the canonical process on the space
$C([0,T];\BR)$ of continuous functions on $[0,T]$, and by
$P_{s,x}$ the law of $X^{s,x}$, i.e.
$P_{s,x}=P\circ(X^{s,x})^{-1}$. We may and will assume that
$X^{s,x}_s=x$, $t\in[0,s]$, and hence that $P_{s,x}$ is a measure
on $C([0,T];\BR)$. Write
\[
M_{s,t}=X_t-X_s-\int^t_s(r-d)X_{\theta}\,d\theta,\quad
B_{s,t}=\int^t_s\frac1{\sigma X_{\theta}}\,dM_{s,\theta}, \quad
0\le s\le t\le T
\]
and observe that if $x\neq0$ then under $P_{s,x}$ the process
$B_{s,\cdot}$ is a standard Wiener process on $[s,T]$ with respect
to the natural filtration generated by $X$. Furthermore, for $0\le
s<t\le T$ set
\[
K_{s,t}=u(s,X_s)-u(t,X_t) +\int^t_sru(\theta,X_{\theta})\,d\theta
+\int^t_s\sigma\partial_x u(\theta,X_{\theta})\,dB_{s,\theta}
\]
and
\[
\tilde K_{s,t}=\int^t_s(dX_{\theta}-rK)^+
\fch_{\{u(\theta,X_{\theta})=g(X_{\theta})\}} \,d\theta.
\]
Let $(Y^{s,x},Z^{s,x},K^{s,x})$ be  a solution of (\ref{eq1.3})
and let $\tilde K^{s,x}$ denote the process defined by the right
hand-side of (\ref{eq1.6}). By Theorem \ref{th2.3}, for every
$(s,x)\in[0,T)\times\BR$,
\begin{align*}
K^{s,x}_t-K^{s,x}_s&=u(s,X^{s,x}_s)-u(t,X^{s,x}_t)
+\int^t_sru(\theta,X^{s,x}_{\theta})\,d\theta\\
&\quad+\int^t_s\sigma\partial_x
u(\theta,X^{s,x}_{\theta})\,dW_{\theta},\quad 0\le s<t\le T,\quad
P\mbox{-}a.s..
\end{align*}
From this and the fact that the law of $(X,B_{s,\cdot})$ under
$P_{s,x}$ is equal to the law of $(X^{s,x},W_{\cdot}-W_s)$ under
$P$ we conclude that the law of $K_{s,\cdot}$ under $P_{s,x}$ is
equal to the law of $K^{s,x}$ under $P$. Consequently, by
(\ref{eq2.7}), for every $s\in[0,T)$, $x\neq0$,
\begin{equation}
\label{eq3.2} E_{s,x}\int_{s}^{T}\eta(t,X_t)\,dK_{s,t}
=\int_{Q_{sT}}\eta(t,y) p(s,x,t,y)\,d\mu(t,y)
\end{equation}
for all $\eta\in C_{0}(Q_{sT})$, where $E_{s,x}$ denotes the
expectation with respect to $P_{s,x}$. Thus, the additive
functional $K=\{K_{s,t};0\le s\le t\le T\}$ of the Markov family
$\{(X,P_{s,x});(s,x)\in[0,T)\times\BR\}$ corresponds to the
measure $\mu$ in the sense defined in \cite{K}. Similarly, for
every $s\in[0,T)$, $x\neq0$ the law of $\tilde K_{s,\cdot}$ under
$P_{s,x}$ is equal to the law of $\tilde K^{s,x}$ under $P$, and
hence, by part (i), (\ref{eq3.2}) is satisfied with $K$ replaced
by $\tilde K$, i.e. the additive functional $\tilde K=\{\tilde
K_{s,t};0\le s\le t\le T\}$ corresponds to $\mu$, too. By
\cite[Corollary 6.6]{K}, $P_{s,x}(K_{s,t}=\tilde
K_{s,t},t\in[s,T])=1$ for every $s\in[0,T)$, $x\neq0$. Hence
$P(K^{s,x}_t=\tilde K^{s,x}_t,t\in[s,T])=1$ for $s\in[0,T)$,
$x\neq0$, which completes the proof.
\end{proof}

\begin{corollary}
If $(Y^{s,x},Z^{s,x},K^{s,x})$ is a solution of (\ref{eq1.3}) then
$(Y^{s,x},Z^{s,x})$ is a solution of (\ref{eq1.7}). Conversely, if
$(Y^{s,x},Z^{s,x})$ is a solution of (\ref{eq1.7}) then
$(Y^{s,x},Z^{s,x},K^{s,x})$ with $K^{s,x}$ defined by
(\ref{eq1.6}) is a solution of (\ref{eq1.3}).
\end{corollary}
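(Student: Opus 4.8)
The forward implication is essentially a bookkeeping consequence of Theorem~\ref{th3.1}(ii). The plan is to start from a solution $(Y^{s,x},Z^{s,x},K^{s,x})$ of the RBSDE (\ref{eq1.3}) and rewrite the driver of its dynamics. Since the obstacle condition in (\ref{eq1.3}) gives $Y^{s,x}_\theta\ge g(X^{s,x}_\theta)$, the indicator ${\mathbf{1}}_{(-\infty,g(x)]}(y)$ appearing in $q$ reduces, along the trajectory, to ${\mathbf{1}}_{\{Y^{s,x}_\theta=g(X^{s,x}_\theta)\}}$, whence $q(X^{s,x}_\theta,Y^{s,x}_\theta)=(dX^{s,x}_\theta-rK)^+{\mathbf{1}}_{\{Y^{s,x}_\theta=g(X^{s,x}_\theta)\}}$. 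By Theorem~\ref{th3.1}(ii) the right-hand side is exactly the density of $dK^{s,x}_\theta$, so $K^{s,x}_T-K^{s,x}_t=\int_t^Tq(X^{s,x}_\theta,Y^{s,x}_\theta)\,d\theta$. Substituting this into the first line of (\ref{eq1.3}) turns it into (\ref{eq1.7}), and the integrability conditions (\ref{eq2.1}) for $Y^{s,x},Z^{s,x}$ are inherited unchanged. This gives the first assertion.

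For the converse I would argue by uniqueness rather than by verifying the three defining properties of (\ref{eq1.3}) by hand. The point is that once one knows $Y^{s,x}_\theta\ge g(X^{s,x}_\theta)$, everything falls into place: the integrand $(dX^{s,x}_\theta-rK)^+{\mathbf{1}}_{\{Y^{s,x}_\theta=g(X^{s,x}_\theta)\}}$ defining $K^{s,x}$ in (\ref{eq1.6}) coincides with $q(X^{s,x}_\theta,Y^{s,x}_\theta)$, so (\ref{eq1.7}) becomes the first line of (\ref{eq1.3}); and the process $K^{s,x}$ is nonnegative, continuous, starts at $0$, and, being carried by the set $\{Y^{s,x}=g(X^{s,x})\}$, automatically satisfies the Skorokhod condition $\int_s^T(Y^{s,x}_\theta-g(X^{s,x}_\theta))\,dK^{s,x}_\theta=0$. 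Thus the whole problem reduces to producing the inequality $Y^{s,x}\ge g(X^{s,x})$ for an arbitrary solution of (\ref{eq1.7}).

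To obtain that inequality I would establish that (\ref{eq1.7}) has at most one solution in the class (\ref{eq2.1}). Given two solutions $(Y^1,Z^1)$, $(Y^2,Z^2)$, I apply It\^o's formula to $|Y^1_t-Y^2_t|^2$; since both carry the terminal value $g(X^{s,x}_T)$, the boundary term vanishes and, writing $\delta Y=Y^1-Y^2$, $\delta Z=Z^1-Z^2$, one obtains
\[
E|\delta Y_t|^2+E\int_t^T|\delta Z_\theta|^2\,d\theta
=2E\int_t^T \delta Y_\theta\big(-r\,\delta Y_\theta
+q(X^{s,x}_\theta,Y^1_\theta)-q(X^{s,x}_\theta,Y^2_\theta)\big)\,d\theta.
\]
Here $r\ge0$ makes the first term nonpositive, and --- this is the crux --- although $q(x,\cdot)$ is discontinuous, it is nonincreasing in $y$, so $\delta Y_\theta(q(X^{s,x}_\theta,Y^1_\theta)-q(X^{s,x}_\theta,Y^2_\theta))\le0$. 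Hence the right-hand side is $\le0$, forcing $Y^1=Y^2$ and $Z^1=Z^2$. With uniqueness in hand, any solution of (\ref{eq1.7}) must equal the pair $(Y^{s,x},Z^{s,x})$ coming from the unique solution of the RBSDE via the forward implication; in particular it satisfies $Y^{s,x}\ge g(X^{s,x})$, and by Theorem~\ref{th3.1}(ii) the process $K^{s,x}$ of (\ref{eq1.6}) is precisely the reflecting term of (\ref{eq1.3}). The triple therefore solves (\ref{eq1.3}).

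The step I expect to be the main obstacle is exactly the uniqueness of (\ref{eq1.7}): because the driver $y\mapsto -ry+q(x,y)$ jumps at $y=g(x)$, the standard Lipschitz BSDE theory does not apply, and one must instead exploit that the discontinuity has the right sign, i.e.\ that the driver is monotone nonincreasing in $y$. Care must also be taken to verify that the stochastic integral arising in the It\^o computation is a genuine martingale, which follows from the integrability in (\ref{eq2.1}) via the Burkholder--Davis--Gundy inequality.
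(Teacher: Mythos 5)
Your proposal is correct and follows essentially the same route as the paper: the forward direction is read off from Theorem~\ref{th3.1}(ii) together with the identity ${\mathbf{1}}_{(-\infty,g(x)]}(Y^{s,x}_\theta)={\mathbf{1}}_{\{Y^{s,x}_\theta=g(X^{s,x}_\theta)\}}$ on $\{Y^{s,x}\ge g(X^{s,x})\}$, and the converse is reduced to uniqueness for (\ref{eq1.7}), which holds because $y\mapsto q(x,y)$ is nonincreasing. The only difference is that you carry out in detail the monotone-driver It\^o argument (including the martingale verification) that the paper merely asserts in one line.
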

\begin{proof}
The first part follows immediately from Theorem \ref{th3.1}. The
second part is a consequence of the first one and the fact that
the solution of (\ref{eq1.7}) is unique, because for every
$x\in\mathbb{R}$ the function $y\mapsto q(x,y)$ is decreasing.
\end{proof}
\medskip

Let $\xi$ denote the discounted payoff process for the American
option, i.e.
\[
\xi_t=e^{-r(t-s)}g(X^{s,x}_t),\quad t\in[s,T].
\]
By (\ref{eq1.7}),
\begin{align*}
e^{r(t-s)}Y^{s,x}_t&=e^{-r(T-s)}g(X^{s,x}_T) +\int^T_t
e^{-r(\theta-s)}q(X^{s,x}_{\theta},Y^{s,x}_{\theta})\,d\theta\\
&\quad-\int^T_te^{-r(\theta-s)}Z^{s,x}_{\theta}\,dW_{\theta}.
\end{align*}
From this and the fact that
$V(t,X^{s,x}_t)=u(t,X^{s,x}_t)=Y^{s,x}_t$, $t\in[s,T]$, we obtain
\begin{corollary}
The Snell envelope $\eta_t=e^{-r(t-s)}V(t,X^{s,x}_t)$,
$t\in[s,T]$, of $\xi$ admits the representation
\begin{equation}
\label{eq3.7} \eta_t=E\left(e^{-r(T-s)}g(X^{s,x}_T)
+\int^T_te^{-r(\theta-s)}
q(X^{s,x}_{\theta},Y^{s,x}_{\theta})\,d\theta\,|\FF_t\right).
\end{equation}
\end{corollary}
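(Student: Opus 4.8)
The plan is to start from the discounted form of BSDE (\ref{eq1.7}) displayed immediately before the statement and then simply condition on $\FF_t$. That display already rewrites the quantity $e^{-r(t-s)}Y^{s,x}_t$ as the discounted terminal payoff $e^{-r(T-s)}g(X^{s,x}_T)$, plus the discounted driver integral $\int^T_t e^{-r(\theta-s)}q(X^{s,x}_\theta,Y^{s,x}_\theta)\,d\theta$, minus the discounted stochastic integral $\int^T_t e^{-r(\theta-s)}Z^{s,x}_\theta\,dW_\theta$; this identity itself is just integration by parts (or It\^o's formula) applied to $e^{-r(\cdot-s)}Y^{s,x}$ using the differential form of (\ref{eq1.7}), in which the $\pm rY^{s,x}$ terms cancel. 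So the only work remaining is to show that the stochastic integral contributes nothing under $E(\cdot\,|\FF_t)$.

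First I would record the identity $\eta_t=e^{-r(t-s)}Y^{s,x}_t$, which follows from $V(t,X^{s,x}_t)=u(t,X^{s,x}_t)=Y^{s,x}_t$ (Theorem \ref{th2.3}) together with the definition of $\eta$. This reduces the claim to applying the conditional expectation $E(\cdot\,|\FF_t)$ to the displayed discounted equation. The left-hand side $e^{-r(t-s)}Y^{s,x}_t$ is $\FF_t$-measurable, hence unchanged; the terminal term and the Lebesgue-integral term pass through $E(\cdot\,|\FF_t)$ and reproduce exactly the right-hand side of (\ref{eq3.7}).

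The one point requiring care is the vanishing of $E(\int^T_t e^{-r(\theta-s)}Z^{s,x}_\theta\,dW_\theta\,|\FF_t)$. I would justify this by observing that the discount factor $e^{-r(\theta-s)}$ is bounded on $[s,T]$ and that $E\int^T_s|Z^{s,x}_\theta|^2\,d\theta<\infty$ by (\ref{eq2.1}); consequently the process $M_u=\int^u_s e^{-r(\theta-s)}Z^{s,x}_\theta\,dW_\theta$, $u\in[s,T]$, is a genuine square-integrable martingale. Since $\int^T_t e^{-r(\theta-s)}Z^{s,x}_\theta\,dW_\theta=M_T-M_t$, the martingale property gives $E(M_T-M_t\,|\FF_t)=0$, and the stochastic integral is eliminated.

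There is no real obstacle in this argument: the estimate (\ref{eq2.1}) makes the martingale property automatic, and once the discounted form of the BSDE has been written down the corollary follows immediately by conditioning. The only thing worth stating explicitly, beyond the routine verification above, is the identification $\eta_t=e^{-r(t-s)}Y^{s,x}_t$ that links the Snell envelope to the solution of (\ref{eq1.7}).
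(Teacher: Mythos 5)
Your proposal is correct and follows essentially the same route as the paper: the paper likewise discounts BSDE (\ref{eq1.7}) by integration by parts, invokes the identification $V(t,X^{s,x}_t)=u(t,X^{s,x}_t)=Y^{s,x}_t$ from Theorem \ref{th2.3}, and takes conditional expectation, with the vanishing of the stochastic integral left implicit. Your explicit verification via (\ref{eq2.1}) that $M_u=\int_s^u e^{-r(\theta-s)}Z^{s,x}_\theta\,dW_\theta$ is a genuine square-integrable martingale merely spells out a step the paper takes for granted.
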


From (\ref{eq3.7}) we get immediately the early exercise premium
representation for $V$. For instance, for American put option,
\begin{align}
\label{eq3.8} V(s,x)&=Ee^{-r(T-s)}g(X^{s,x}_T)\\
&\quad +E\int^T_s
e^{-r(t-s)}(rK-dX^{s,x}_t)^+\,{\mathbf{1}}_{\{V=g\}}(t,X^{s,x}_t)\,dt.
\nonumber
\end{align}

Representations (\ref{eq3.7}), (\ref{eq3.8}) are known (see
\cite[Corollary 2.7.11]{KS}). To our knowledge our proof is new.
Let us stress, however, that we were influenced by results of
\cite{BKR1}.

\subsection*{Acknowledgements}

Research supported by Polish Ministry of Science and Higher
Education (grant no. N N201 372 436).
\medskip\\
Tomasz Klimsiak, Andrzej Rozkosz\\
Faculty of Mathematics and Computer Science\\
Nicolaus Copernicus University\\
Chopina 12/18\\
87-100 Toru\'n, Poland\\
E-mail: tomas@mat.uni.torun.pl, rozkosz@mat.uni.torun.pl


\begin{thebibliography}{10}
\bibitem{BM}
V. Bally,  and A. Matoussi, {\em Weak solutions for SPDEs and
backward doubly stochastic differential equations}, J. Theoret.
Probab. 14 (2001), 125--164.

\bibitem{BKR1}
F.S. Benth, K.H. Karlsen, and K. Reikvam, {\em A semilinear Black
and Scholes partial differential equation for valuing American
options},  Finance Stoch.  7 (2003), 277--298.

\bibitem{BKR2}
F.S. Benth, K.H. Karlsen, and K. Reikvam,  {\em On a semilinear
Black and Scholes partial differential equation for valuing
American options. Part II: approximate solutions and convergence},
Interfaces Free Bound.  6 (2004), 379--404.

\bibitem{EKPPQ}
N. El Karoui, C. Kapoudjian, E. Pardoux, S. Peng and M.C. Quenez,
{\em  Reflected solutions of backward SDEs, and related obstacle
problems for PDE's}, Ann. Probab. 25 (1997), 702--737.

\bibitem{EPQ}
N. El Karoui, S. Peng and M.C. Quenez,  {\em Backward Stochastic
Differential Equations in Finance}, Mathematical Finance 7 (1997),
1--77.

\bibitem{EQ}
N. El Karoui and M.C. Quenez, {\em Non-linear pricing theory and
backward stochastic differential equations}, Lecture Notes in
Math. 1656 (1997), 191--246.

\bibitem{Friedman}
A. Friedman, {\em Partial Differential Equations of Parabolic
Type}, Prentice-Hall, Englewood Cliffs, N.J., 1994.

\bibitem{KS}
I. Karatzas and S.E. Shreve {\em Methods of Mathematical Finance},
Springer, New York, 1998.

\bibitem{K}
T. Klimsiak, {\em On Time-Dependent Functionals of Diffusions
Corresponding to Divergence Form Operators}, J. Theoret. Probab.
(2011), DOI 10.1007/s10959-011-0381-4.

\bibitem{Lad}
O.A. Ladyzenskaya, V.A. Solonnikov and N.N. Ural'ceva,  {\em
Linear and Quasi-Linear Equations of Parabolic Type}. Transl.
Math. Monographs {\bf 23}. Amer. Math. Soc., Providence, R.I.,
1968.

\bibitem{Nualart}
D. Nualart, {\em The Malliavin Calculus and Related Topics}.
Springer, Berlin, 1995.

\bibitem{Pa}
\'E. Pardoux, {\em Backward Stochastic Differential Equations and
Viscosity Solutions of Systems of Semilinear Parabolic and
Elliptic PDEs of Second Order}, in: {\em Stochastic Analysis and
Related Topics VI} (The Geilo Workshop, 1996),  L. Decreusefond,
J. Gjerde, B. \O ksendal, A.S. \"Ust\"unel (eds.), Birkh\"user,
Boston, 1998, 79--127.

\end{thebibliography}
\end{document}